\documentclass[11pt]{amsart}
\usepackage{graphicx,dsfont}
\usepackage[dvipdfm, colorlinks, linkcolor=blue, anchorcolor=green, citecolor=red]{hyperref}
\usepackage{mathpazo}
\textwidth=160truemm
\textheight=215truemm
\headsep=4truemm
\topmargin=0pt
\oddsidemargin=0pt
\evensidemargin=0pt
\parindent=16pt
\vfuzz2pt 
\hfuzz2pt 
\newtheorem{theorem}{Theorem}[section]
\newtheorem{corollary}[theorem]{Corollary}
\newtheorem{lemma}[theorem]{Lemma}
\newtheorem{proposition}[theorem]{Proposition}
\theoremstyle{definition}
\newtheorem{definition}[theorem]{Definition}

\newtheorem{question}[theorem]{Question}

\begin{document}
\title[Rooted trees with the same plucking polynomial]{Rooted trees with the same plucking polynomial}%
\author{Zhiyun Cheng}%
\address{School of Mathematical Sciences, Laboratory of Mathematics and Complex Systems, Beijing Normal University, Beijing 100875, China}%
\email{czy@bnu.edu.cn}%

\author{Sujoy Mukherjee}
\address{Department of Mathematics, The George Washington University, Washington, DC 20052, USA}
\email{sujoymukherjee@gwu.edu}

\author{J\'{o}zef H. Przytycki}
\address{Department of Mathematics, The George Washington University, Washington, DC 20052, USA, and University of Gda\'nsk}
\email{przytyck@gwu.edu}

\author{Xiao Wang}
\address{Department of Mathematics, The George Washington University, Washington, DC 20052, USA}
\email{wangxiao@gwu.edu}

\author{Seung Yeop Yang}
\address{Department of Mathematics, The George Washington University, Washington, DC 20052, USA}
\email{syyang@gwu.edu}

\subjclass[2010]{05C05; 05C31}%
\keywords{rooted tree; plucking polynomial; Kauffman bracket skein module}%
\begin{abstract}
In this paper we give a sufficient and necessary condition for two rooted trees with the same plucking polynomial. Furthermore, we give a criteria for a sequence of non-negative integers to be realized as a rooted tree.
\end{abstract}
\maketitle
\section{Introduction}
For a rooted tree $T$ embedded on the upper half plane, a new polynomial $Q(T)\in \mathds{Z}[q]$, called the plucking polynomial was recently introduced by the third author in \cite{Prz2016}. If $T$ consists of a single point then $Q(T)=1$. If $|E(T)|\geq1$, the plucking polynomial $Q(T)$ is defined recursively as follows
\begin{center}
$Q(T)=\sum\limits_{v\in l(T)}q^{r(T, v)}Q(T-v)$.
\end{center}
Here $l(T)$ denotes the leaf-set of $T$, $r(T, v)$ equals the number of edges of $T$ on the right side of the unique path connecting $v$ with the root (we assume the root is situated at the origin), and $T-v$ is the subtree of $T$ obtained by deleting $v$ from $T$. See Figure \ref{figure1} for an example of $r(T, v)$. Note that with a given embedding of $T$, if we fix the direction from left to right then we obtain an ordering on $V(T)$. It is easy to observe that $r(T, v)$ is nothing but the number of vertices which are greater than $v$. As an example, the rooted tree described in Figure \ref{figure1} has plucking polynomial $[2]_q[3]_q[5]_q[6]_q$.
\begin{figure}[h]
\centering
\includegraphics{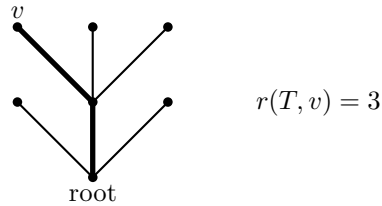}\\
\caption{An example of $r(T, v)$}\label{figure1}
\end{figure}

The definition of the plucking polynomial is motivated by the Kauffman bracket skein modules of 3-manifolds. For an oriented 3-manifold $M$, the \emph{Kauffman bracket skein module} $\mathcal{S}(M)$ \cite{Prz1991} is generated by all isotopy classes of framed links in $M$ and then one takes the quotient by
\begin{enumerate}
  \item skein relation: $[K]=A[K_{\infty}]+A^{-1}[K_0]$,
  \item framing relation: $[K\cup\bigcirc]=(-A^2-A^{-2})[K]$.
\end{enumerate}
Here $[\bigcirc]$ denotes the trivial framed knot and $K, K_{\infty}, K_0$ only differ in a small $D^3$, see Figure \ref{figure2}.
\begin{figure}
\centering
\includegraphics{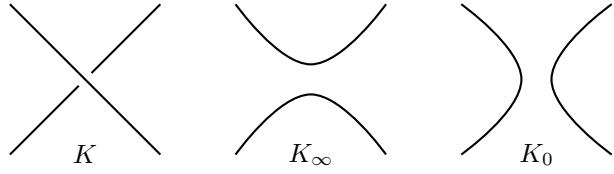}\\
\caption{Local diagrams in skein relation}\label{figure2}
\end{figure}
In \cite{Dab2015}, Dabkowski, Li and the third author studied $(m\times n)$-lattice crossing $L(m, n)$ in the relative Kauffman bracket skein module of $P\times I$, where $P$ denotes an $(m\times n)$ parallelogram with $(2m+2n)$ points on the boundary, see Figure \ref{figure3}.
\begin{figure}[h]
\centering
\includegraphics{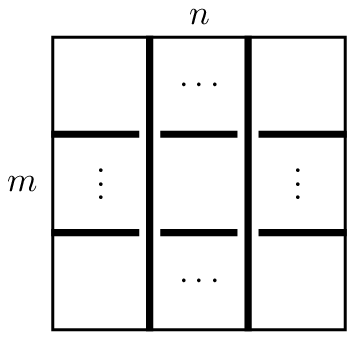}\\
\caption{$L(m, n)$}\label{figure3}
\end{figure}

Roughly speaking, in order to calculate $L(m, n)$ in the relative Kauffman bracket skein module of $P\times I$ one needs to smooth all $mn$ crossing points according to the skein relation $[K]=A[K_{\infty}]+A^{-1}[K_0]$ and then replace each trivial component with $(-A^2-A^{-2})$. The result can be written in the form as
\begin{center}
$[L(m, n)]=\sum\limits_{C\in\text{Cat}_{m, n}}r(C)C$,
\end{center}
where Cat$_{m, n}$ denotes all crossingless connections between the boundary points of $P$ and $r(C)\in \mathds{Z}[A, A^{-1}]$. The explicit formula of $r(C)$ for states $C$ with no returns was investigated in \cite{Dab2015}. After the paper \cite{Dab2015} was finished, it was found that one can construct a rooted tree for each Catalan state $C\in\text{Cat}_{m, n}$ such that the plucking polynomial of this rooted tree is closely related to the coefficient $r(C)$. These relations will be discussed in detail in a sequel paper \cite{Dab2017}.

With this motivation, let us continue with our discussion of plucking polynomial. Although, the definition depends on plane embedding of the rooted tree, the polynomial itself is independent of embedding \cite{Prz2016}, see Section 2. It is natural to ask the following two questions
\begin{enumerate}
  \item For a given polynomial $f(q)\in\mathds{Z}[q]$, does there exist a rooted tree $T$ such that $Q(T)=f(q)$?
  \item When do two rooted trees have the same plucking polynomial?
\end{enumerate}

The first question was answered in \cite{Che2017}, and in this paper we will focus on the second question. Note that if the root of a rooted tree $T$ has only one child, i.e. there is only one edge incident with the root, then by contracting this edge we will obtain a new rooted tree $T'$. Let us call this operation a \emph{destabilization}, and the inverse operation a \emph{stabilization}. According to the definition of plucking polynomial, it is not difficult to see that $T$ and $T'$ have the same plucking polynomial. In other words, stabilization and destabilization both preserve the plucking polynomial. In particular, the plucking polynomial of any 1-ary rooted tree equals 1. We say a rooted tree is \emph{reduced} if the root has more than one child.

At the end of \cite{Che2017}, we introduced the \emph{exchange move} for rooted trees, see Figure \ref{figure4}. More precisely, for a fixed embedded rooted tree $T$ and two vertices $v_1, v_2$, we consider two embedded circles $S_1, S_2$ such that $S_i\cap T=v_i$ $(i=1, 2)$. We use $T_1$ and $T_2$ to denote the subtrees bounded by $S_1$ and $S_2$ respectively. In other words, $T_i$ is a subtree of $v_i$ spanning some children of $v_i$ and all of their descendants ($i=1, 2$). If $|E(T_1)|=|E(T_2)|$ then we switch the positions of $T_1$ and $T_2$. We found in \cite{Che2017} that exchange move preserves plucking polynomial. We asked a natural question whether
if two reduced rooted trees have the same plucking polynomial then one can be obtained from the other by finitely many exchange moves. For all examples illustrated in \cite{Che2017} the answer is yes.
\begin{figure}[h]
\centering
\includegraphics{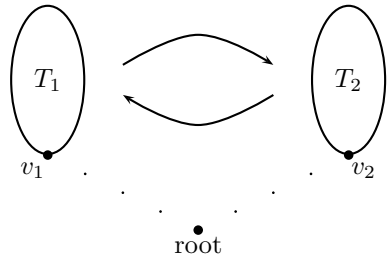}\\
\caption{An exchange move}\label{figure4}
\end{figure}

Following a seminar talk by the first author about plucking polynomial at Peking University in September 2016, Hao Zheng observed the following potential counterexample. Later we will show that this is really a counterexample, i.e. although the two rooted trees in Figure \ref{figure5} have the same plucking polynomial $\frac{[8]_q[11]_q[12]_q[13]_q[14]_q[15]_q[16]_q[17]_q[18]_q[19]_q}{[2]_q^3[3]_q[5]_q[6]_q}$, none of them can be obtained from the other via finitely many exchange moves.
\begin{figure}[h]
\centering
\includegraphics{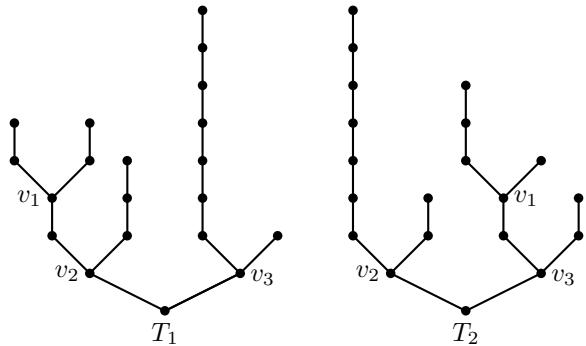}\\
\caption{Two rooted trees with the same plucking polynomial}\label{figure5}
\end{figure}

In order to address the second question mentioned above, we need to introduce a generalized exchange move, called the \emph{permutation move}. The idea of permutation move is as follows. For a rooted tree, we choose $n$ vertices $v_1, \cdots, v_n$ and some families of subtrees rooted at the chosen vertices. Then, for a permutation move, we remove the subtrees and re-glue them back such that the number of edges above the chosen vertices is preserved. The precise definition is as follows.

\begin{definition}
Let us consider $n$ vertices $v_1, \cdots, v_n$ of a rooted tree $T$ and two sequences $\{\alpha_i\}_{0\leq i\leq n}, \{\beta_i\}_{0\leq i\leq n}$ which satisfy
\begin{center}
$0=\alpha_0<\alpha_1<\cdots<\alpha_{n-1}<\alpha_n=\beta_n>\beta_{n-1}>\cdots>\beta_1>\beta_0=0$.
\end{center}
For $v_1$, choose several children of it, say $w_1, \cdots, w_{\alpha_1}$, and draw embedded circles $S^1_i$ $(1\leq i\leq\alpha_1)$ in the plane such that $S^1_i\cap T=v_1$, $w_i$ is located in the interior of $S^1_i$ and other $w_j$ $(j\neq i)$ are located outside of $S^1_i$. We use $T_i$ to denote the subtree of $T$ bounded by $S^1_i$, see Figure \ref{figure6} for an example of $S^1_1$. The other subtrees $T_i$ $(\alpha_1+1\leq i\leq\alpha_n)$ can be defined in the same way. If for any $0\leq i\leq n-1$ and some element $P\in \mathcal{S}_{\alpha_n}$, the symmetric group on the set $\{1, 2, \cdots, \alpha_n\}$, we have
\begin{center}
$\sum\limits_{j=1}^{\alpha_{i+1}-\alpha_i}|E(T_{\alpha_i+j})|=\sum\limits_{j=1}^{\beta_{i+1}-\beta_i}|E(T_{P(\beta_i+j)})|$,
\end{center}
then as illustrated in Figure \ref{figure6}, for all $0\leq i\leq n-1$ we replace $T_{\alpha_i+1}\vee T_{\alpha_i+2}\cdots\vee T_{\alpha_{i+1}}$ with $T_{P(\beta_i+1)}\vee T_{P(\beta_i+2)}\cdots\vee T_{P(\beta_{i+1})}$. We name this operation a \emph{permutation move} on $T$.
\end{definition}
\begin{figure}[h]
\centering
\includegraphics{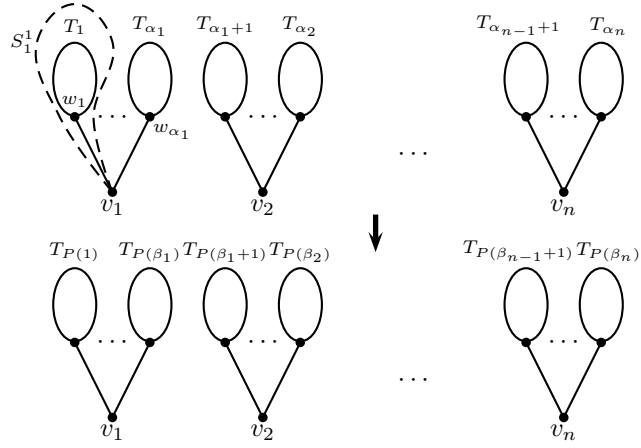}\\
\caption{A permutation move}\label{figure6}
\end{figure}

Clearly a permutation move preserves the plucking polynomial (see Proposition 2.4) and the exchange move described in Figure \ref{figure4} is a special case of permutation move. The main result of this paper is as follows.
\begin{theorem}
Let $T_I$ and $T_{II}$ be two rooted trees. Then $Q(T_I)=Q(T_{II})$ if and only if $T_I$ and $T_{II}$ can be related by a finite number of stabilizations/destabilizations and one permutation move.
\end{theorem}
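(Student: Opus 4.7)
The backward implication is immediate: Proposition~2.4 asserts that a permutation move preserves $Q$, and the introductory discussion already records that every stabilization/destabilization preserves $Q$.

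For the forward implication my plan is induction on $n:=|E(T_I)|=|E(T_{II})|$ (these edge counts must agree since they equal $\deg Q$). The base case $n=0$ is trivial. In the inductive step, I first apply destabilizations to both trees so that both roots have at least two children, reducing to the case of reduced trees. Writing the root-subtrees of $T_I$ as $T_I^{(1)},\ldots,T_I^{(k)}$ and of $T_{II}$ as $T_{II}^{(1)},\ldots,T_{II}^{(\ell)}$ (each including the edge to the root), I invoke the product formula from \cite{Prz2016},
$$Q(T)=\binom{n}{n_1,\ldots,n_m}_{q}\prod_{i=1}^{m}Q\bigl(T^{(i)}\bigr),$$
and equate the two expressions for $Q(T_I)=Q(T_{II})$.

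The crux of the argument is to extract, from this equality, a bijection between the root-subtrees of $T_I$ and of $T_{II}$ under which matched pairs carry equal plucking polynomials. Since every plucking polynomial is (by the first author's result in \cite{Che2017}) a product of terms $[m]_q$ divided by a similar product, cyclotomic-factor bookkeeping pins down the multiset $\{n_i\}$ of edge-counts of root-subtrees, and an inductive cancellation of the $q$-multinomial factor then pins down the multiset $\{Q(T^{(i)})\}$ of subtree polynomials. Producing the matching may require permuting the cyclic order of subtrees at the root, which I record as a single ``top-level'' permutation move.

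By the inductive hypothesis, each matched pair of subtrees is related by a finite sequence of stabilizations/destabilizations together with exactly one permutation move, carried out inside that subtree. The final step is to assemble all these per-subtree permutation moves, together with the top-level reordering, into a single permutation move on the whole tree. Because the chosen vertices of the sub-moves sit in pairwise disjoint subtrees, their union of selected vertices and circles still meets the edge-preservation conditions of the definition locally, so they compose into one legitimate permutation move. The main obstacle, and the most bookkeeping-heavy part, is re-scheduling the operations so that all stabilizations occur first, then the single composite permutation move, and finally all destabilizations; this requires checking that stabilization/destabilization commutes with a permutation move after suitable relabeling of the selected vertices, a verification I expect to be routine but intricate. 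Once that commutation lemma is in place, the induction closes.
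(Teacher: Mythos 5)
The backward implication is fine. The forward implication, however, hinges on a claim that is false: that $Q(T_I)=Q(T_{II})$ forces a bijection between the root-subtrees of $T_I$ and of $T_{II}$ under which matched pairs have equal edge counts and equal plucking polynomials. Neither part of this survives scrutiny. The multiset of root-subtree edge counts is not determined by $Q$ (an exchange move with $v_1$ the root and $v_2$ a deeper vertex can change the number of children of the root), and even when the edge counts happen to match, the matched subtrees need not have equal plucking polynomials. The paper's own main example (Figures \ref{figure5} and \ref{figure9}) witnesses this: in both $T_1$ and $T_2$ the root carries subtrees of $9$ and $10$ edges, but the $10$-edge subtree of $T_1$ has top weight $(3,6)$ while that of $T_2$ has top weight $(2,7)$, and comparison of the remaining weights shows their $D$-multisets, hence their plucking polynomials, differ. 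The required permutation move there redistributes sub-subtrees \emph{across} the two root-branches (indeed across a chain $v_1,v_2,v_3$ with $v_1$ a descendant of $v_2$), so it cannot be assembled from per-root-subtree moves plus a top-level reordering, which is exactly the decomposition your induction produces. In short, your inductive step proves something strictly weaker than the theorem, and it is the gap between these two that Section 3 of the paper is about.

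The paper's induction is organized differently to avoid this: after reducing, one picks the child $u_i$ of each root with maximal descendant count (forced to be a root-child since $d(u_i)=a_2$), contracts the edge $r_iu_i$, applies the inductive hypothesis to the contracted trees (which still have equal $D$-multisets by Proposition 2.4/2.5), and then \emph{lifts} the single resulting permutation move back through the contraction by splitting the merged vertex into $r_1$ and $u_1$ and checking that the edge-sum constraints in the definition of the permutation move are still met. If you want to salvage your plan, you would need to replace the root-subtree decomposition by this edge-contraction step; as written, the ``cyclotomic bookkeeping'' paragraph cannot be made to work. (Two smaller points: $|E(T_I)|=|E(T_{II})|$ does not follow from $\deg Q$ --- a path of any length has $Q=1$, and even for reduced trees $\deg Q\neq |E|$; the correct route is Proposition 2.5 after destabilizing. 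Also, composing several independent permutation moves into one is plausible but is not something the paper needs, since its induction only ever produces a single move.)
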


The rest of this paper is organized as follows. Section 2 reviews some basic properties of plucking polynomial and then points out some other accessible calculation methods of the plucking polynomial. Section 3 is devoted to show that the potential counterexample in Figure \ref{figure5} is indeed a counterexample. Section 4 provides a proof of the main theorem. Finally we revisit the realization problem of the plucking polynomial and give an application of it, which may be of interest to experts in graph theory, algebraic combinatorics, or statistical mechanics.

\section{Some properties of plucking polynomial}
We first recall some standard notations in quantum calculus \cite{Kac2002}. The $q$-analog of $n$, sometimes called the \emph{$q$-bracket} or \emph{$q$-number}, is defined to be $[n]_q=\frac{1-q^n}{1-q}$. Similarly, the \emph{$q$-factorial} can be defined as $[n]_q!=\prod\limits_{i=1}^n[i]_q$. Further the \emph{$q$-binomial coefficients} (also called \emph{Gaussian binomial coefficients}) can be simply expressed as $\binom{m+n}{m,n}_q=\frac{[m+n]_q!}{[m]_q![n]_q!}$. In general, we define the \emph{$q$-multinomial coefficient} as $\binom{n_1+\cdots+n_k}{n_1,\cdots,n_k}_q=\frac{[n_1+\cdots+n_k]_q!}{[n_1]_q!\cdots[n_k]_q!}$.

A crucial observation about the plucking polynomial in \cite{Prz2016} can be described as follows.
\begin{lemma}[\cite{Prz2016}]
Let $T_1, T_2$ be a pair of rooted trees on the upper half plane, and $T_1\bigvee T_2$ the wedge product of $T_1$ and $T_2$ ($T_1$ on the left), then we have
\begin{center}
$Q(T_1\bigvee T_2)=\binom{|E(T_1)|+|E(T_2)|}{|E(T_1)|, |E(T_2)|}_qQ(T_1)Q(T_2)$.
\end{center}
\end{lemma}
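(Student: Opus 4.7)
The plan is to proceed by induction on $m + n$, where $m := |E(T_1)|$ and $n := |E(T_2)|$. For $m = n = 0$ both trees reduce to single points, so $Q(T_1 \vee T_2) = 1 = Q(T_1)Q(T_2)$ while $\binom{0}{0,0}_q = 1$. The boundary cases with exactly one of $m, n$ equal to zero are immediate, since then $T_1 \vee T_2$ coincides with the nontrivial factor and the $q$-multinomial equals $1$.

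For the inductive step with $m, n \geq 1$, I would expand the defining recursion
$$Q(T_1 \vee T_2) = \sum_{v \in l(T_1 \vee T_2)} q^{r(T_1 \vee T_2, v)}\, Q\bigl((T_1 \vee T_2) - v\bigr),$$
observe that $l(T_1 \vee T_2) = l(T_1) \sqcup l(T_2)$, and extract the geometric input from the hypothesis that $T_1$ sits on the left of $T_2$: for $v \in l(T_1)$ every edge of $T_2$ lies to the right of the root-to-$v$ path, so $r(T_1 \vee T_2, v) = r(T_1, v) + n$ and $(T_1 \vee T_2) - v = (T_1 - v) \vee T_2$; symmetrically, for $v \in l(T_2)$ every edge of $T_1$ lies to the left of this path, giving $r(T_1 \vee T_2, v) = r(T_2, v)$ and $(T_1 \vee T_2) - v = T_1 \vee (T_2 - v)$. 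Applying the inductive hypothesis to the smaller wedges $(T_1 - v) \vee T_2$ and $T_1 \vee (T_2 - v)$, then factoring the ($v$-independent) $q$-multinomials out of each sum and recognising the remaining inner sums as $Q(T_1)$ and $Q(T_2)$ by the defining recursion, collapses the right-hand side to
$$\Bigl(q^n \binom{m+n-1}{m-1, n}_q + \binom{m+n-1}{m, n-1}_q\Bigr)\, Q(T_1)\, Q(T_2).$$
The $q$-Pascal identity $\binom{m+n}{m,n}_q = q^n \binom{m+n-1}{m-1,n}_q + \binom{m+n-1}{m,n-1}_q$, which is a direct consequence of the factorial formula for $q$-multinomials, then closes the induction.

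I do not anticipate a serious obstacle: the argument is a routine induction once the geometric input is in place. The only point demanding attention is the asymmetric contribution $+n$ in the exponent coming from leaves of the left factor, and matching it to the correct version of the $q$-Pascal identity; had the positions of $T_1$ and $T_2$ been swapped, the dual identity $\binom{m+n}{m,n}_q = \binom{m+n-1}{m-1,n}_q + q^m \binom{m+n-1}{m,n-1}_q$ would appear instead, consistent with the reflected geometry.
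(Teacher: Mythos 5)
Your proof is correct. The paper itself states this lemma without proof, citing \cite{Prz2016}, and your argument --- induction on $|E(T_1)|+|E(T_2)|$ via the plucking recursion, with the geometric observations $r(T_1\vee T_2,v)=r(T_1,v)+n$ for $v\in l(T_1)$ and $r(T_1\vee T_2,v)=r(T_2,v)$ for $v\in l(T_2)$, closed by the $q$-Pascal identity $\binom{m+n}{m,n}_q=q^n\binom{m+n-1}{m-1,n}_q+\binom{m+n-1}{m,n-1}_q$ --- is precisely the standard proof given in that reference, so there is nothing to add.
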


Since $\binom{n_1+\cdots+n_k}{n_1,\cdots,n_k}_q=\binom{n_{k-1}+n_k}{n_{k-1}, n_k}_q\binom{n_{k-2}+n_{k-1}+n_{k}}{n_{k-2}, n_{k-1}+n_k}_q\cdots\binom{n_1+\cdots+n_k}{n_1, n_2+\cdots+n_k}_q$, repeating the lemma above $(k-1)$ times one can easily conclude the following result.
\begin{corollary}[\cite{Prz2016}]
Let $T_1, \cdots, T_k$ be $k$ rooted trees on the upper half plane. Denote the wedge product of $T_1, \cdots, T_k$ by $\bigvee\limits_{i=1}^kT_i$ ($T_i$ is on the left of $T_{i+1}$ ). Then
\begin{center}
$Q(\bigvee\limits_{i=1}^kT_i)=\binom{\sum\limits_{i=1}^k|E(T_i)|}{|E(T_1)|, \cdots, |E(T_k)|}_q\prod\limits_{i=1}^kQ(T_i)$.
\end{center}
\end{corollary}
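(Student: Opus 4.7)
The plan is to proceed by induction on $k$, using Lemma 2.1 as both base case and inductive step. The base case $k=2$ is exactly the statement of Lemma 2.1. For $k=1$ the statement is trivial since the multinomial coefficient reduces to $1$.

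For the inductive step, I would assume the formula holds for $k-1$ rooted trees and write the wedge product as a binary wedge
\[
\bigvee_{i=1}^{k} T_i \;=\; T_1 \vee \Bigl(\bigvee_{i=2}^{k} T_i\Bigr),
\]
which is valid since the wedge construction amounts to identifying all roots in the prescribed left-to-right order, so it is associative. Applying Lemma 2.1 to this binary split gives
\[
Q\Bigl(\bigvee_{i=1}^{k} T_i\Bigr) \;=\; \binom{\sum_{i=1}^{k}|E(T_i)|}{|E(T_1)|,\; \sum_{i=2}^{k}|E(T_i)|}_{q} Q(T_1)\, Q\Bigl(\bigvee_{i=2}^{k} T_i\Bigr),
\]
noting that the wedge $\bigvee_{i=2}^{k} T_i$ has edge-count $\sum_{i=2}^{k}|E(T_i)|$. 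By the inductive hypothesis applied to $T_2,\dots,T_k$, the factor $Q(\bigvee_{i=2}^{k} T_i)$ equals $\binom{\sum_{i=2}^{k}|E(T_i)|}{|E(T_2)|,\dots,|E(T_k)|}_{q}\prod_{i=2}^{k}Q(T_i)$.

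It remains to combine the two $q$-binomial / $q$-multinomial factors, which is done using the identity
\[
\binom{n_1+\cdots+n_k}{n_1,\; n_2+\cdots+n_k}_{q}\binom{n_2+\cdots+n_k}{n_2,\dots,n_k}_{q} \;=\; \binom{n_1+\cdots+n_k}{n_1,\dots,n_k}_{q},
\]
an immediate consequence of the definition of the $q$-multinomial via $q$-factorials. Substituting $n_i=|E(T_i)|$ yields the desired formula. Since the product identity displayed in the paper's lead-in to the corollary is exactly the iterated form of this two-factor splitting, one sees that performing $k-1$ applications of Lemma 2.1 reproduces the full multinomial coefficient, confirming the stated formula.

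There is essentially no obstacle: the only thing to verify carefully is the associativity of the wedge product (so that iterated binary wedging agrees with the $k$-fold wedge used in the statement) and the $q$-multinomial identity above, both of which are routine. The entire argument is a clean induction once these two ingredients are in place.
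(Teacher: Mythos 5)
Your proof is correct and is essentially the paper's own argument: the paper also obtains the corollary by iterating Lemma 2.1 $(k-1)$ times, using exactly the $q$-multinomial factorization identity you state (displayed in the paper in its fully iterated form just before the corollary). Your write-up simply makes the induction explicit.
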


The recursive definition of the plucking polynomial stated in the beginning of Section 1 is inconvenient to use to calculate the plucking polynomial. In general, one needs to pluck out the leaves of a rooted tree one by one and then calculate the plucking polynomial of the new rooted tree with fewer edges. Corollary 2.2 gives us a more convenient way to calculate the plucking polynomial of rooted trees. For a rooted tree $T$ and a fixed vertex $v$, we use the notation $d(v)$ to refer to the number of descendants of $v$. For example, if $v=r$, the root of $T$, then $d(r)=|V(T)|-1=|E(T)|$. Denote all children of $v$ by $v_1, \cdots, v_k$, we associate a \emph{Boltzmann weight} $W(v)$ with $v$, which is defined by
\begin{center}
$W(v)=\binom{d(v)}{d(v_1)+1, \cdots, d(v_k)+1}_q$.
\end{center}
Note that here we have the equation $d(v)=\sum\limits_{i=1}^k(d(v_i)+1)$. Now we have the following state product formula of the plucking polynomial.
\begin{proposition}[State product formula \cite{Prz2016}]
$Q(T)=\prod\limits_{v\in V(T)}W(v)$.
\end{proposition}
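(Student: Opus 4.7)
The plan is to prove the state product formula by induction on the number of vertices of $T$, peeling off the root with the help of the wedge-product formula in Corollary 2.2.

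For the inductive step, let $r$ be the root of $T$ with children $v_1,\ldots,v_k$, and let $\tilde T_i$ denote the subtree of $T$ rooted at $v_i$. Let $T_i$ be the rooted tree consisting of $r$, the edge $rv_i$, and $\tilde T_i$, so that $T=T_1\vee\cdots\vee T_k$ and $|E(T_i)|=d(v_i)+1$. The key observation is that the $q$-multinomial coefficient appearing in Corollary 2.2 for this wedge decomposition at the root,
\begin{equation*}
\binom{d(r)}{d(v_1)+1,\ldots,d(v_k)+1}_q,
\end{equation*}
is exactly the Boltzmann weight $W(r)$. Moreover each $T_i$ is a stabilization of $\tilde T_i$, so $Q(T_i)=Q(\tilde T_i)$, and the Boltzmann weights $W(v)$ for $v\in V(\tilde T_i)$ are unchanged when we pass from $T$ to $\tilde T_i$, since the children and descendants of any such $v$ are the same in both trees. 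Applying the inductive hypothesis to each $\tilde T_i$ therefore gives
\begin{equation*}
Q(T)=W(r)\prod_{i=1}^{k}Q(\tilde T_i)=W(r)\prod_{i=1}^{k}\prod_{v\in V(\tilde T_i)}W(v)=\prod_{v\in V(T)}W(v).
\end{equation*}

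The base case $|V(T)|=1$ is trivial: $Q(T)=1$ and $W(r)$ is the empty $q$-multinomial coefficient, which equals $1$. The single-child case $k=1$ requires a small separate verification, because Corollary 2.2 is phrased for a wedge of at least two pieces; in that case $W(r)=\binom{d(r)}{d(r)}_q=1$, $T$ is a stabilization of $\tilde T_1$, and the formula reduces to the inductive hypothesis on $\tilde T_1$. I do not anticipate any real obstacle here — the argument is essentially an exercise in matching the multinomial bookkeeping of Corollary 2.2 with the definition of $W(r)$, the only mildly delicate point being that the degenerate one-child and zero-edge cases must be handled by convention rather than by Corollary 2.2 directly.
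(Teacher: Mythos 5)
Your proof is correct. Note that the paper itself offers no proof of this proposition --- it is quoted from \cite{Prz2016} --- so there is nothing internal to compare against; your induction on $|V(T)|$, peeling off the root via Corollary 2.2 and matching the multinomial coefficient $\binom{d(r)}{d(v_1)+1,\ldots,d(v_k)+1}_q$ with $W(r)$ using $d(r)=\sum_i(d(v_i)+1)$, is exactly the standard derivation one would expect, and your handling of the degenerate cases (leaves contribute empty weights equal to $1$; the one-child case reduces to stabilization invariance, which the paper records in the introduction) closes the argument cleanly.
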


It immediately follows that plucking polynomial does not depend on the embedding type. In other words, it is an invariant of rooted trees. On the other hand, since the plucking polynomial can be written as the product of some $q$-multinomial coefficients and each $q$-multinomial coefficient can be written as the product of some $q$-binomial coefficients, we conclude that plucking polynomial of rooted trees can be written as the product of some $q$-binomial coefficients. Based on this fact, in \cite{Che2017} we gave a complete answer to the first question mentioned in Section 1. To address the second question, we need to simplify the calculation formula of plucking polynomial once again.

\begin{proposition}
Let $T$ be a rooted tree and $r$ the root of it, then we have $Q(T)=\frac{[d(r)]_q!}{\prod\limits_{v\in V(T)\backslash{\{r\}}}[d(v)+1]_q}$.
\end{proposition}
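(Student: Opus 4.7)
The plan is to derive this as a direct consequence of the state product formula (Proposition 2.3) by writing each Boltzmann weight explicitly as a ratio of $q$-factorials and then telescoping across the tree.

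First I would expand each Boltzmann weight:
\begin{equation*}
W(v)=\binom{d(v)}{d(v_1)+1,\ldots,d(v_k)+1}_q=\frac{[d(v)]_q!}{\prod_{i=1}^{k}[d(v_i)+1]_q!},
\end{equation*}
where $v_1,\ldots,v_k$ are the children of $v$ (note the identity $d(v)=\sum_i(d(v_i)+1)$ makes this a genuine $q$-multinomial, and for a leaf $v$ the empty product convention gives $W(v)=1$). Substituting into Proposition 2.3 yields
\begin{equation*}
Q(T)=\prod_{v\in V(T)}\frac{[d(v)]_q!}{\prod_{u\text{ child of }v}[d(u)+1]_q!}.
\end{equation*}

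Next I would reindex the denominator. Each non-root vertex $u\in V(T)\setminus\{r\}$ is the child of exactly one vertex (its parent), so it contributes exactly one factor $[d(u)+1]_q!$ to the overall denominator; the root contributes nothing. Therefore
\begin{equation*}
Q(T)=\frac{\prod_{v\in V(T)}[d(v)]_q!}{\prod_{u\in V(T)\setminus\{r\}}[d(u)+1]_q!}=\frac{[d(r)]_q!\cdot\prod_{v\in V(T)\setminus\{r\}}[d(v)]_q!}{\prod_{v\in V(T)\setminus\{r\}}[d(v)+1]_q!}.
\end{equation*}
Canceling $[d(v)]_q!$ against $[d(v)+1]_q!=[d(v)+1]_q\cdot[d(v)]_q!$ for each non-root $v$ leaves exactly the claimed expression.

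There is no real obstacle here beyond bookkeeping; the only thing to watch is the asymmetry between root and non-root vertices in the two products, which is precisely what produces the single $[d(r)]_q!$ in the numerator. Alternatively, one could give an inductive proof on $|V(T)|$: the base case is a single vertex, and the inductive step follows by applying Corollary 2.2 at the root together with the inductive hypothesis on each subtree hanging off the root, which after rearranging gives the same formula. I would prefer the direct telescoping derivation since it is more transparent and shorter.
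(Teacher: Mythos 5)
Your proof is correct and is essentially the same argument as the paper's: expand each Boltzmann weight via the state product formula, observe that each non-root vertex $v$ contributes $[d(v)]_q!$ to the numerator and a single factor $[d(v)+1]_q!$ to the denominator (coming from its parent's weight), and cancel. Your write-up is just a more explicit bookkeeping of the same telescoping cancellation.
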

\begin{proof}
According to Proposition 2.3, the plucking polynomial $Q(T)$ has the form $\prod\limits_{v\in V(T)}W(v)$. Let $v(\neq r)$ be a vertex of $T$ and $w$ its ancestor. Note that the numerator of $W(v)$ equals $[d(v)]_q!$, and the denominator of $W(w)$ has a factor $[d(v)+1]_q!$. After canceling $[d(v)]_q!$ for all non-root vertices the result follows.
\end{proof}

Proposition 2.4 motivates us to consider the multiset $D(T)=\{d(v)|v\in V(T)\}$\footnote{Equivalently, instead of the set $D(T)$, one can also consider the generating function $\sum\limits_{i}c_ix^i$, where $c_i$ denotes the multiplicity of $i$ in $D(T)$.}. According to Proposition 2.4, the plucking polynomial $Q(T)$ is determined by the set $D(T)$. The following proposition tells us that for reduced trees $Q(T)$ and $D(T)$ are essentially equivalent.

\begin{proposition}
Assume $T_1$ and $T_2$ are two reduced rooted trees, then $Q(T_1)=Q(T_2)$ if and only if $D(T_1)=D(T_2)$.
\end{proposition}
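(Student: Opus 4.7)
The ``if'' direction is immediate from Proposition 2.4, which expresses $Q(T)$ purely in terms of $D(T)$. For the converse, the plan is to recover $D(T)$ from the cyclotomic factorization of $Q(T)$. Writing $N := d(r) = |E(T)|$ and $a_k(T) := |\{v \in V(T) \setminus \{r\} : d(v)+1 = k\}|$, Proposition 2.4 becomes
\[
Q(T) \;=\; \frac{[N]_q!}{\prod_{k \geq 2}[k]_q^{a_k(T)}}.
\]
Using the cyclotomic factorization $[k]_q = \prod_{d \mid k,\, d \geq 2} \Phi_d(q)$, the multiplicity of $\Phi_d$ in $Q(T)$ reads
\[
m_d(T) \;=\; \left\lfloor \tfrac{N}{d} \right\rfloor \;-\; \sum_{j \geq 1} a_{jd}(T), \qquad d \geq 2.
\]

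Given $Q(T_1) = Q(T_2)$, we have $m_d(T_1) = m_d(T_2)$ for all $d \geq 2$, and I would recover $D(T)$ in two stages. First, $N$ is read off from $Q(T)$: since $T$ is reduced, the root has at least two children, hence $d(v) \leq N - 2$ for every $v \neq r$, so no denominator factor equals $[N]_q$ and we get $m_N(T) = 1$ while $m_d(T) = 0$ for $d > N$; thus $N$ is the largest index with $m_d(T) > 0$. Second, each $a_k(T)$ for $k \geq 2$ is extracted by descending induction on $k$ via
\[
a_k(T) \;=\; \lfloor N/k \rfloor \;-\; m_k(T) \;-\; \sum_{j \geq 2} a_{jk}(T),
\]
where the terms $a_{jk}$ with $j \geq 2$ have been determined at earlier stages of the induction. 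Finally $a_1(T) = N - \sum_{k \geq 2} a_k(T)$ by the total vertex count $|V(T) \setminus \{r\}| = N$, so both $N$ and the sequence $(a_k(T))_{k\ge 1}$, equivalently $D(T)$, are determined by $Q(T)$.

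The one delicate point is the reducedness hypothesis, which is invoked precisely to guarantee $d(v) + 1 < N$ for every $v \neq r$, so that the leading cyclotomic factor $\Phi_N$ of the numerator is not canceled by a denominator term. Without reducedness, a root with a unique child ($d(v) = N - 1$ for that child) produces exactly such a cancellation and erases the ability to identify $N$; this matches the known fact that stabilization/destabilization preserves $Q(T)$ while altering $D(T)$, so reducedness is the correct hypothesis under which $Q \leftrightarrow D$ is a bijection.
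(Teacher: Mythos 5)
Your proof is correct and takes essentially the same route as the paper: both arguments use reducedness to guarantee that the top factor of $[d(r)]_q!$ survives cancellation (the paper detects $d(r)$ as the root of unity of minimal argument, you as the largest $d$ with $\Phi_d$ appearing in $Q(T)$), and then recover the denominator multiset, hence $D(T)$. Your descending induction on cyclotomic multiplicities in fact makes explicit the final step that the paper dismisses as ``obvious.''
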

\begin{proof}
It suffices to prove that if $Q(T_1)=Q(T_2)$ then $D(T_1)=D(T_2)$. Assume $D(T_1)=\{a_1, \cdots, a_n\}$ and $D(T_2)=\{b_1, \cdots, b_m\}$, where $a_1\geq a_2\geq\cdots\geq a_n$ and $b_1\geq b_2\geq\cdots\geq b_m$. Since $T_1$ and $T_2$ are both reduced, we observe that $d(r_1)=a_1>a_2+1, d(r_2)=b_1>b_2+1$ and $n=a_1+1, m=b_1+1$. Here $r_i$ denotes the root of $T_i$ $(i=1, 2)$. It follows that
\begin{center}
$Q(T_1)=\frac{[a_1]_q!}{\prod\limits_{i=2}^{n}[a_i+1]_q}=\frac{\prod\limits_{i=1}^{a_1}(1-q^i)}
{\prod\limits_{i=2}^{n}(1-q^{a_i+1})}$ and $Q(T_2)=\frac{[b_1]_q!}{\prod\limits_{i=2}^{m}[b_i+1]_q}=\frac{\prod\limits_{i=1}^{b_1}(1-q^i)}
{\prod\limits_{i=2}^{m}(1-q^{b_i+1})}$.
\end{center}
It is clear that $e^{\frac{2\pi i}{a_1}}$ is a root of $Q(T_1)$ with the minimal argument and $e^{\frac{2\pi i}{b_1}}$ is a root of $Q(T_2)$ with the minimal argument. Since $Q(T_1)=Q(T_2)$, we must have $a_1=b_1$, therefore $n=a_1-1=b_1-1=m$.

Now, as we have $\prod\limits_{i=2}^{n}[a_i+1]_q=\prod\limits_{i=2}^{n}[b_i+1]_q$, it obviously follows that $a_i=b_i$ for all $2\leq i\leq n$.
\end{proof}

We remark that here we only mentioned some basic properties of the plucking polynomial. For some other properties, for example the unimodality of its coefficients (see \cite{Che2018}) or the connection between the plucking polynomial and homological algebra, the readers are referred to \cite{Prz2016} for more details.

\section{The exchange move is not sufficient}
In this section we show that the two rooted trees depicted in Figure \ref{figure5} cannot be connected by exchange moves, although they have the same plucking polynomial. In other words, exchange move is not sufficient to connect all pairs of rooted trees with the same plucking polynomial.

First we notice that one can only obtain finitely many rooted trees from the rooted tree $T_1$ in Figure \ref{figure5} via exchange moves. By comparing them with $T_2$ one will find that $T_2$ is different from all of them.

Recall that in Section 2 we associate a Boltzmann weight $W(v)$ with each vertex $v$, which is defined by
\begin{center}
$W(v)=\binom{d(v)}{d(v_1)+1, \cdots, d(v_k)+1}_q$.
\end{center}
Let $U(v)$ denote the unordered $k$-tuple $(d(v_1)+1, \cdots, d(v_k)+1)$, and $U(T)$ the multiset $\{U(v)\}_{v\in V(T)}$. Consider $v_1, v_2$ in Figure \ref{figure4}. If $U(v_1)=(a_1, \cdots, a_m, b_1, \cdots, b_n)$, $U(v_2)=(c_1, \cdots, c_s, d_1, \cdots, d_t)$, and $\sum\limits_{i=1}^nb_i=\sum\limits_{j=1}^sc_j$, then after the exchange move we have $U(v_1)=(a_1, \cdots, a_m, c_1, \cdots, c_s)$, $U(v_2)=(b_1, \cdots, b_n, d_1, \cdots, d_t)$ and all other tuples in $U(T)$ are preserved.

For the two rooted trees $T_1, T_2$ in Figure \ref{figure5}, we have
\begin{center}
$U(T_1)=\{(9, 10), (3, 6), (1, 7), (2, 2), (6), (5)\times2, (4), (3), (2)\times2, (1)\times4, (0)\times5\}$
\end{center}
and
\begin{center}
$U(T_2)=\{(9, 10), (2, 7), (2, 6), (1, 3), (6), (5)\times2, (4), (3), (2)\times2, (1)\times4, (0)\times5\}$.
\end{center}
A key observation is, although many rooted trees can be obtained from $T_1$ via exchange moves, most of them have the same set $U$ as $T_1$. The only exception is
\begin{center}
$U=\{(3, 6, 10), (9), (1, 7), (2, 2), (6), (5)\times2, (4), (3), (2)\times2, (1)\times4, (0)\times5\}$.
\end{center}
It follows that $T_2$ cannot be obtained from $T_1$ by exchange moves.

One can directly show that essentially there exist only four different rooted trees that can be obtained from $T_1$ by exchange moves, see Figure \ref{figure10}. Here we illustrate how one rooted tree can be obtained from another by exchange moves. Obviously $T_2$ is not one of them.
\begin{figure}[h]
\centering
\includegraphics{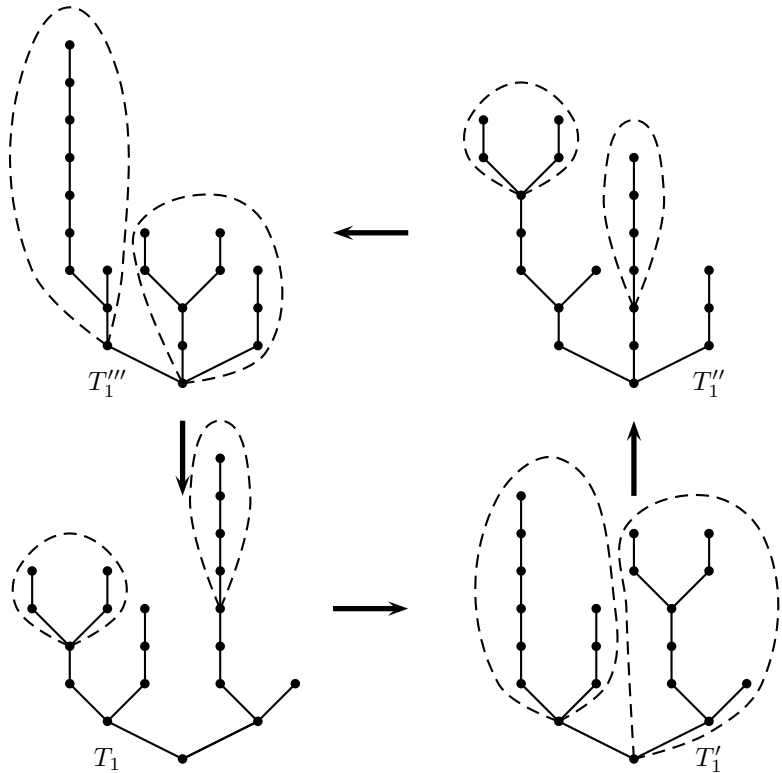}\\
\caption{Rooted trees obtained from $T_1$ by exchange moves}\label{figure10}
\end{figure}

Before ending this section, we would like to remark that one can find some other pairs of ``smaller" rooted trees which have the same plucking polynomial but cannot be connected by exchange moves. For example, the two rooted trees $T_3$ and $T_4$ described in Figure \ref{figure11} have the same plucking polynomial. Similar as above one can check that they are not related by exchange moves. Note that $|E(T_3)|=|E(T_4)|=18<19=|E(T_1)|=|E(T_2)|$. These two rooted trees $T_3$ and $T_4$ can be regarded as a reduced version of Hao Zheng's $T_1, T_2$ described in Figure \ref{figure5}.
\begin{figure}[h]
\centering
\includegraphics{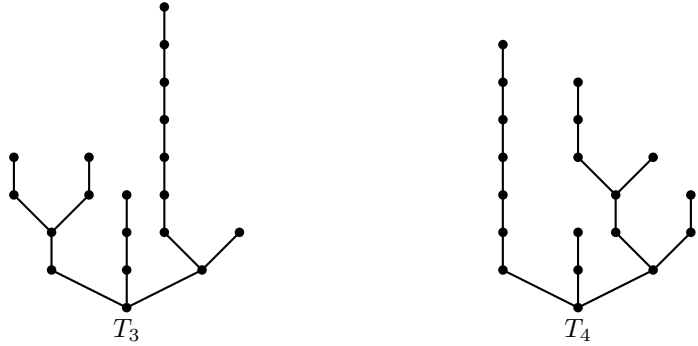}\\
\caption{Another counterexample with fewer edges}\label{figure11}
\end{figure}

\section{The proof of Theorem 1.2}
Now we give a proof of Theorem 1.2.
\begin{proof}
With some destabilization (if necessary) we may assume that $T_I, T_{II}$ are both reduced. By Proposition 2.5 we deduce that $D(T_I)=D(T_{II})$. In particular, $T_I$ and $T_{II}$ have the same number of edges. Now it is sufficient to prove that $T_I$ and $T_{II}$ are related by one permutation move.

The proof goes by induction on $|E(T_I)|(=|E(T_{II})|)$. When $|E(T_I)|=1, 2, 3, 4$, there do not exist a pair of distinct rooted trees with the same plucking polynomial. The first pair of rooted trees appear when $|E(T_I)|=|E(T_{II})|=5$, see Figure \ref{figure7}. It is easy to see that the second tree can be obtained from the first tree by one exchange move, which interchanges the places of the subtrees bounded by the dashed curves.
\begin{figure}[h]
\centering
\includegraphics{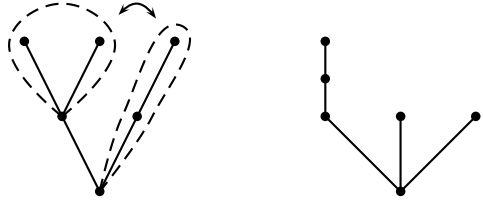}\\
\caption{Two 5-edge rooted trees with the same plucking polynomial}\label{figure7}
\end{figure}

Now we assume that any two reduced rooted trees can be transformed into each other by one permutation move if they have $(k-1)$ edges and the same plucking polynomial. It is sufficient to prove that the statement is still correct for the case $|E(T_{I})|=|E(T_{II})|=k$. Suppose that $T_I, T_{II}$ are two reduced rooted trees with the same plucking polynomial and $|E(T_I)|=|E(T_{II})|=k$. We need to prove that $T_I$ and $T_{II}$ can be related by one permutation move.

Denote the roots of $T_I, T_{II}$ by $r_1, r_2$ respectively. Let us assume that $D(T_I)=D(T_{II})=\{a_1, a_2, \cdots, a_{k+1}\}$, where $a_1\geq a_2\geq\cdots\geq a_{k+1}$. It is clear that $a_1=d(r_1)=d(r_2)=k$. Choose vertices $u_1, u_2$ in $T_I, T_{II}$ respectively such that $d(u_1)=d(u_2)=a_2$. Then $u_i$ must be a child of $r_i$ $(i=1, 2)$. We use $e_{i}$ to denote the edge between $r_i$ and $u_i$ $(i=1, 2)$, and by taking edge contractions on $e_1$ and $e_2$ we will obtain two new rooted trees $T_I'$ and $T_{II}'$. Note that for these two new rooted trees we have $D(T_I')=D(T_{II}')=\{a_1-1, a_3, \cdots, a_{k+1}\}$, and therefore $T_I'$ and $T_{II}'$ have the same plucking polynomial. By the induction assumption, it follows that $T_{II}'$ can be obtained from $T_I'$ by making one permutation move. Since there is a one-to-one correspondence between $E(T_I)\backslash \{e_1\}$ and $E(T_I')$, $V(T_I)\backslash \{u_1\}$ and $V(T_I')$, we will use the same notation to denote an edge (vertex) in $T_I$ and its corresponding edge (vertex) in $T_I'$. In particular, the roots of $T_I'$ and $T_{II}'$ are still denoted by $r_1$ and $r_2$.

If $r_1$ coincides with some element in $\{v_1, \cdots, v_n\}$ (see Figure \ref{figure6}), without loss of generality, let us assume that $v_1=r_1$. In $T_I'$, denote the children of $v_1$ involved in the permutation move by $w_1, \cdots, w_{\alpha_1}$. Then in $T_I$, some elements of $\{w_1, \cdots, w_{\alpha_1}\}$ are adjacent to $u_1$ and others are adjacent to $r_1$. Without loss of generality, we assume $w_1, \cdots, w_\gamma$ are adjacent to $u_1$ and $w_{\gamma+1}, \cdots, w_{\alpha_1}$ are adjacent to $r_1$, where $0\leq \gamma\leq\alpha_1$. As before, for each $i=1, 2, \cdots, \alpha_1$ we still use $T_i$ to denote the subtree involved in the permutation move which contains the vertex $w_i$. After the permutation move, the places of $\bigvee\limits_{i=1}^{\alpha_1}T_i$ will be occupied by $\bigvee\limits_{i=1}^{\beta_1}T_{P(i)}$, where $P$ is a permutation of $\{1, \cdots, \alpha_n\}$; see Figure \ref{figure6}. Similarly, in the original rooted tree $T_{II}$, some of $\{T_{P(1)}, \cdots, T_{P(\beta_1)}\}$ are attached to $u_2$ and others are attached to $r_2$. Let us assume $\bigvee\limits_{i=1}^{\delta}T_{P(i)}$ are attached to $u_2$ and $\bigvee\limits_{i=\delta+1}^{\beta_1}T_{P(i)}$ are attached to $r_2$.

Now let us consider other subtrees attached to $r_1$ in $T_I'$. We divide $T_I'\backslash\bigvee\limits_{i=1}^{\alpha_1}T_i$, the remainder of $\bigvee\limits_{i=1}^{\alpha_1}T_i$ in $T_I'$, into four subtrees $T_{A_1}\bigvee T_{B_1}\bigvee T_{C_1}\bigvee T_{D_1}$, such that in $T_I$ the subtree $T_{A_1}\bigvee T_{C_1}$ is attached to $u_1$ and $T_{B_1}\bigvee T_{D_1}$ is attached to $r_1$. Since $T_I'$ and $T_{II}'$ differ by one permutation move, and the rest children of $r_1$ are not involved in the permutation move, there are four subtrees corresponding to $T_{A_1}\bigvee T_{B_1}\bigvee T_{C_1}\bigvee T_{D_1}$ in $T_{II}'$, say $T_{A_2}\bigvee T_{B_2}\bigvee T_{C_2}\bigvee T_{D_2}$. Without loss of generality, we assume in $T_{II}$ the subtree $T_{A_2}\bigvee T_{D_2}$ is attached to $u_2$ and the other subtree $T_{B_2}\bigvee T_{C_2}$ is attached to $r_2$, see Figure \ref{figure12}. We remark that because of the permutation move, $T_{A_1}$ and $T_{A_2}$ may represent different rooted trees, but they have the same number of edges, i.e. $|E(T_{A_1})|=|E(T_{A_2})|$. This equality also holds for the other three pairs of subtrees.
\begin{figure}
\centering
\includegraphics{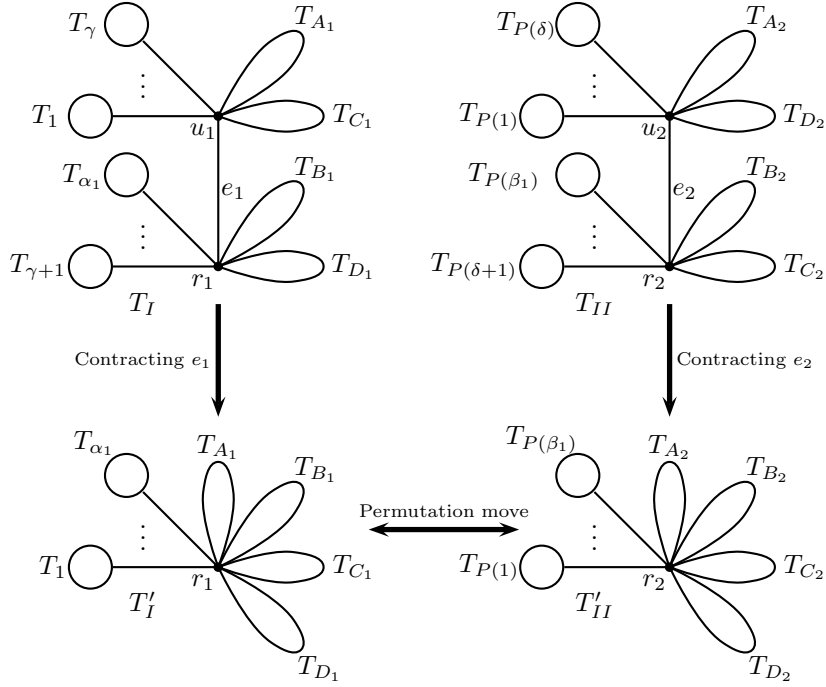}\\
\caption{The relations between $T_I, T_{II}, T_I', T_{II}'$}\label{figure12}
\end{figure}

According to our selection, we have $d(u_1)=d(u_2)$. On the other hand, we know that $d(u_1)=\sum\limits_{i=1}^{\gamma}|E(T_i)|+|E(T_{A_1})|+|E(T_{C_1})|$ and $d(u_2)=\sum\limits_{i=1}^{\delta}|E(T_{P(i)})|+|E(T_{A_2})|+|E(T_{D_2})|$. Together with $|E(T_{A_1})|=|E(T_{A_2})|$, $|E(T_{D_1})|=|E(T_{D_2})|$ and $\sum\limits_{i=1}^{\alpha_1}|E(T_i)|=\sum\limits_{i=1}^{\beta_1}|E(T_{P(i)})|$ (by the definition of the permutation move), we deduce that
$\sum\limits_{i=1}^{\gamma}|E(T_i)|+|E(T_{C_1})|=\sum\limits_{i=1}^{\delta}|E(T_{P(i)})|+|E(T_{D_1})|$,
and $\sum\limits_{i=\gamma+1}^{\alpha_1}|E(T_i)|+|E(T_{D_1})|=\sum\limits_{i=\delta+1}^{\beta_1}|E(T_{P(i)})|+|E(T_{C_1})|$.

Consider the following permutation on $T_I$, see Figure \ref{figure8}. Roughly speaking, this permutation move is obtained from the permutation move on $T_I'$ by splitting $v_1$ into $u_1$ and $r_1$, and all other subtrees $T_{\alpha_1+1}, \cdots, T_{\alpha_n}$ on $v_2, \cdots, v_n$ are the same as that of the permutation move on $T_I'$. Since $\sum\limits_{i=1}^{\gamma}|E(T_i)|+|E(T_{C_1})|=\sum\limits_{i=1}^{\delta}|E(T_{P(i)})|+|E(T_{D_1})|$  and $\sum\limits_{i=\gamma+1}^{\alpha_1}|E(T_i)|+|E(T_{D_1})|=\sum\limits_{i=\delta+1}^{\beta_1}|E(T_{P(i)})|+|E(T_{C_1})|$, we find that this move satisfies the definition of permutation move. It is not difficult to find that under this permutation move $T_I$ will be transformed into $T_{II}$.
\begin{figure}
\centering
\includegraphics{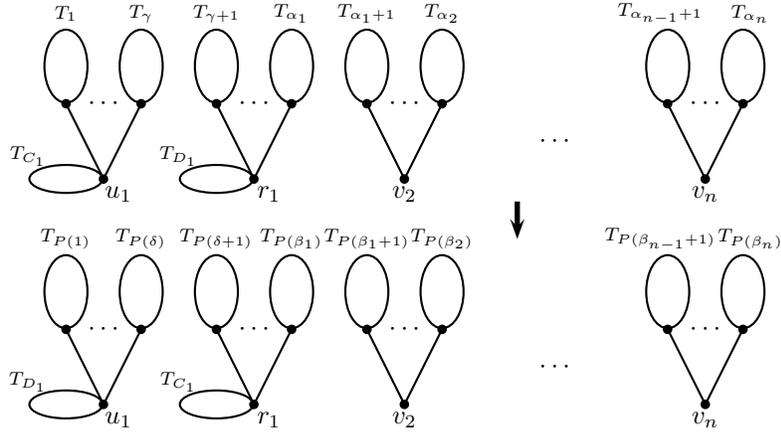}\\
\caption{A permutation move connecting $T_I$ and $T_{II}$}\label{figure8}
\end{figure}

If $v_i\neq r_1$ for any $1\leq i\leq n$, the proof is similar as above. One just needs to notice that in this case $\alpha_1=\beta_1=0$, in other words, $\bigvee\limits_{i=1}^{\alpha_1}T_i=\bigvee\limits_{i=1}^{\beta_1}T_{P(i)}=\emptyset$. The proof is finished.
\end{proof}

\begin{corollary}
Let $T_I$ and $T_{II}$ be two reduced rooted trees, then $D(T_I)=D(T_{II})$ if and only if they can be connected by one permutation move.
\end{corollary}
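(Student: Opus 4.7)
The statement is essentially a specialization of Theorem 1.2 translated through Proposition 2.5, and my plan is to prove it simply by combining those two results, with a small amount of care about what exactly Theorem 1.2 delivers.

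For the easy direction $(\Leftarrow)$, I would assume $T_I$ and $T_{II}$ are related by a single permutation move and invoke the remark immediately preceding Theorem 1.2 (formalized as Proposition 2.4 in the text) that a permutation move preserves the plucking polynomial. Hence $Q(T_I) = Q(T_{II})$, and since both trees are reduced by hypothesis, Proposition 2.5 yields $D(T_I) = D(T_{II})$.

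For the converse $(\Rightarrow)$, I would start from $D(T_I) = D(T_{II})$ and apply Proposition 2.5 in the other direction to conclude $Q(T_I) = Q(T_{II})$. It is tempting to close the argument by invoking Theorem 1.2 as a black box, but Theorem 1.2 a priori allows intermediate stabilizations/destabilizations, which I must rule out. To do so I would revisit the structure of the proof of Theorem 1.2: that proof begins by performing destabilizations solely to reduce to the reduced case, and the inductive body then works entirely within the reduced category and produces a single permutation move. Since our $T_I$ and $T_{II}$ are already reduced, the preliminary destabilization step is vacuous and the induction delivers a permutation move directly between $T_I$ and $T_{II}$.

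The main point requiring any attention is this last observation, namely that the induction in Theorem 1.2 never actually needs a stabilization once both input trees are reduced. This is built into the proof of Theorem 1.2 rather than a new obstacle, so I expect no serious difficulty; the corollary follows essentially by specialization plus a careful reading of the earlier argument.
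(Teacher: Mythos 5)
Your proposal is correct and matches the paper's (implicit) argument: the corollary is exactly what the inductive body of the proof of Theorem 1.2 establishes once both trees are reduced, combined with Proposition 2.5 to pass between $Q$ and $D$. Your observation that the statement of Theorem 1.2 alone does not suffice, but that its proof produces a single permutation move with no stabilizations in the reduced case, is precisely the point the paper relies on.
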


As an example, we will show how to use a permutation move to connect the two rooted trees described in Figure \ref{figure5}. Choose three vertices $v_1, v_2$ and $v_3$ from $T_1$ (see Figure \ref{figure5}, notice that here $v_1$ is a descendant of $v_2$) and $P=(6 3 2 5 4 1)=(1 6)(2 3)(4 5)\in \mathcal{S}_6$. Now the following permutation move (see Figure \ref{figure9}) can be used to transform $T_1$ into $T_2$. Note that we have proved $T_1, T_2$ cannot be connected by exchange moves.
\begin{figure}
\centering
\includegraphics{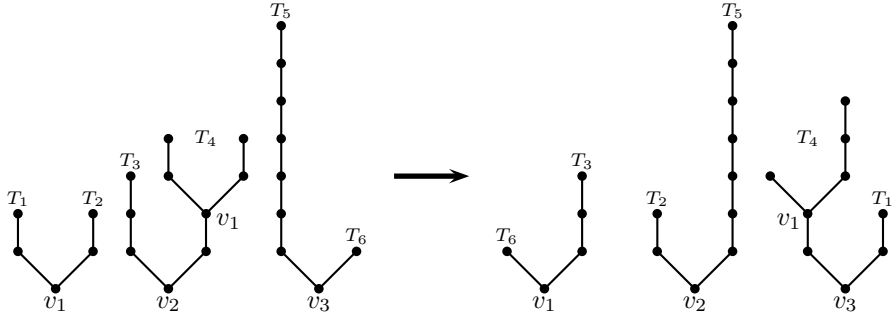}\\
\caption{A permutation move connecting $T_1$ and $T_2$ in Figure \ref{figure5}}\label{figure9}
\end{figure}

Finally, we would like to remark that if two rooted trees can be connected by a permutation move, in general the permutation move is not unique.

\section{The realization problem revisited}
In this section we revisit the first question mentioned in Section 1, i.e. for a given polynomial $f(q)\in\mathds{Z}[q]$, can we find a rooted tree $T$ such that $Q(T)=f(q)$? According to Proposition 2.4, it is equivalent to ask the following question from the viewpoint of graph theory.
\begin{question}
For a given $n$-tuple $\{a_1, a_2, \cdots, a_n\}$ (without loss of generality, we suppose $0\leq a_1\leq a_2\leq\cdots\leq a_n$), is there a rooted tree $T$ with $D(T)=\{a_1, a_2, \cdots, a_n\}$?
\end{question}

It is easy to find some obstacles for a $n$-tuple $\{a_1, \cdots, a_n\}$ being the set $D(T)$ of some rooted tree $T$. For example, it is evident to see that $a_n=n-1$ and $a_{n-1}<a_n$. Hence both $\{0, 0, 1, 1, 2\}$ and $\{0, 0, 1, 2, 2\}$ are not realizable. On the other hand, $a_1$ must be 0. Actually, it is easy to observe that the number of 0's in $\{a_1, \cdots, a_n\}$ is not less than the number of 1's, since each vertex $v$ with $d(v)=1$ has a child $w$ with $d(w)=0$. Therefore, for instance $\{0, 1, 1, 3, 4\}$ is also not realizable. However, even if $\{a_1, \cdots, a_n\}$ satisfies all conditions above, it is not always realizable. As an example, one can easily find that although $\{0, 1, 2, 2, 4\}$ satisfies all conditions above, it cannot be realized as the set $D(T)$ for some rooted tree $T$.

If there exists a rooted tree $T$ with $D(T)=\{a_1, \cdots, a_n\}$, then we know that the plucking polynomial of $T$ equals $\frac{[a_n]_q!}{\prod\limits_{i=1}^{n-1}[a_i+1]_q}$. Since the plucking polynomial of a rooted tree can be written as the product of some $q$-binomial coefficients, it follows that $\frac{[a_n]_q!}{\prod\limits_{i=1}^{n-1}[a_i+1]_q}$ can be written as the product of some $q$-binomial coefficients. As the main result of this section, we will show that this condition is not only necessary but also sufficient. Hence it offers a complete answer to Question 5.1.
\begin{theorem}
For a given $n$-tuple $\{a_1, a_2, \cdots, a_n\}$ where $0\leq a_1\leq a_2\leq\cdots\leq a_n$, there exists a rooted tree $T$ such that $D(T)=\{a_1, a_2, \cdots, a_n\}$ if and only if $\frac{[a_n]_q!}{\prod\limits_{i=1}^{n-1}[a_i+1]_q}$ can be written as the product of some $q$-binomial coefficients.
\end{theorem}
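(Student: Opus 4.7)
The necessity direction is immediate. I would combine the state product formula (Proposition 2.3), which writes $Q(T)=\prod_{v\in V(T)}W(v)$ with each Boltzmann weight a $q$-multinomial coefficient, together with the iterated identity
\[
\binom{N}{n_1,\ldots,n_k}_q=\binom{N}{n_1}_q\binom{N-n_1}{n_2}_q\cdots\binom{n_{k-1}+n_k}{n_{k-1}}_q,
\]
which expands every $q$-multinomial as a product of $q$-binomials. By Proposition 2.4 this product equals $\frac{[a_n]_q!}{\prod_{i=1}^{n-1}[a_i+1]_q}$, so the expression is exhibited as a product of $q$-binomial coefficients.

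For sufficiency I would argue by induction on $a_n$, noting that the realizability of $\{a_1,\ldots,a_n\}$ as some $D(T)$ forces $a_n=n-1$, which we may take as a standing hypothesis. The base cases $a_n\le 1$ are trivial. For the inductive step, the plan is to split into two subcases. If $a_{n-1}=a_n-1$, then $[a_{n-1}+1]_q=[a_n]_q$ cancels and the expression reduces to $\frac{[a_n-1]_q!}{\prod_{i=1}^{n-2}[a_i+1]_q}$, still a product of $q$-binomials; the inductive hypothesis yields a tree which a single stabilization completes. Otherwise the target tree is reduced, and the inductive step demands a partition of $\{a_1,\ldots,a_{n-1}\}$ into nonempty blocks $A_1,\ldots,A_k$ with $k\ge 2$ such that, setting $m_s=\max A_s$, one has $\sum_s(m_s+1)=a_n$ and each
\[
P_s(q):=\frac{[m_s]_q!}{\prod_{a\in A_s\setminus\{m_s\}}[a+1]_q}
\]
is again a product of $q$-binomial coefficients. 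Granted such a partition, the inductive hypothesis furnishes subtrees $T_s$ with $D(T_s)=A_s$, and wedging them at a common new root gives the desired $T$; the identity
\[
\frac{[a_n]_q!}{\prod_{i=1}^{n-1}[a_i+1]_q}=\binom{a_n}{m_1+1,\ldots,m_k+1}_q\prod_{s=1}^{k}P_s(q)
\]
checks that the Boltzmann weight at the new root is consistent and therefore $D(T)=\{a_1,\ldots,a_n\}$.

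The principal obstacle is producing this partition from a given $q$-binomial factorization of $P(q)$. My plan is as follows: since the cyclotomic polynomial $\Phi_{a_n}(q)$ divides $[a_n]_q$ and appears in $\binom{N}{m}_q$ only when $N\geq a_n>m$, at least one factor in the factorization must be of the form $\binom{a_n}{m}_q$; extracting it suggests a two-block split $a_n=m+(a_n-m)$. The delicate part is showing that $\{a_1,\ldots,a_{n-1}\}$ then partitions compatibly so that both residual expressions remain products of $q$-binomials. I expect this to follow from a bookkeeping argument on cyclotomic multiplicities, using the Kummer-type identity $v_{\Phi_d}\!\binom{N}{m}_q=\lfloor N/d\rfloor-\lfloor m/d\rfloor-\lfloor(N-m)/d\rfloor\in\{0,1\}$ together with the non-negativity of the exponents in $P(q)/\binom{a_n}{m}_q$; iterating (equivalently, running strong induction on $a_n$) upgrades the two-block split to the full $k$-block partition. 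This cyclotomic bookkeeping, combined with the matching between blocks of the partition and subtrees of the candidate root, is where I expect the proof to be most technical.
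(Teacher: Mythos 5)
Your necessity argument is exactly the paper's: state product formula, expansion of $q$-multinomials into $q$-binomials, Proposition 2.4. For sufficiency, however, there is a genuine gap, and you have located it yourself. Everything hinges on extracting, from an arbitrary $q$-binomial factorization of $P(q)=\frac{[a_n]_q!}{\prod_{i=1}^{n-1}[a_i+1]_q}$, a partition of $\{a_1,\dots,a_{n-1}\}$ into blocks whose maxima $m_s$ satisfy $\sum_s(m_s+1)=a_n$ and whose associated quotients $P_s$ are again products of $q$-binomials. The cyclotomic argument you sketch only delivers the first, easy step: since $\Phi_{a_n}$ divides $P$ once $a_{n-1}\le a_n-2$, and every nontrivial factor $\binom{N}{m}_q$ must have $N\le a_n$, some factor is of the form $\binom{a_n}{m}_q$. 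It does not produce the assignment of the remaining $a_i$ to the two sides, does not show that $m-1$ and $a_n-m-1$ actually occur among the $a_i$ (they must, being the $d$-values of the root's children in your candidate tree), and does not show that the two residual expressions each refactor into $q$-binomials. That reconstruction is precisely the content of the realization theorem of \cite{Che2017} (Theorem 5.3 in the paper), and redoing it by ``bookkeeping on cyclotomic multiplicities'' is the entire difficulty, not a routine verification. A smaller issue: your standing hypothesis $a_n=n-1$ is not among the theorem's hypotheses and does not follow from the polynomial condition (for $\{0,2\}$ one has $[2]_q!/[1]_q=\binom{2}{1,1}_q$), so as written you would be proving a modified statement.

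The paper sidesteps the construction entirely. After the same reduction to $a_{n-1}\le a_n-2$ by the stabilization trick (which you also have), it observes that the numerator $[a_n]_q!=[1]_q[2]_q\cdots[a_n]_q$ of $P(q)$ repeats no $q$-number, so Theorem 5.3 already supplies \emph{some} rooted tree $T$ with $Q(T)=P(q)$, which may be taken reduced. It then remains only to show this $T$ has the right descendant multiset: writing $D(T)=\{b_1,\dots,b_m\}$ and comparing $\frac{[b_m]_q!}{\prod_i[b_i+1]_q}=\frac{[a_n]_q!}{\prod_i[a_i+1]_q}$, reducedness on both sides forces $b_m=a_n$ (the root of minimal argument, as in Proposition 2.5), after which the denominators match term by term and $D(T)=\{a_1,\dots,a_n\}$. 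To salvage your constructive route you would essentially have to reprove Theorem 5.3; citing it instead collapses your hard step to a short rigidity argument.
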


Before giving the proof of this theorem, let us take a brief review of our main result in \cite{Che2017}. Assume that a polynomial $f(q)$ is a product of some $q$-binomial coefficients, i.e.
\begin{center}
$f(q)=\prod\limits_{i=1}^{k}\binom{m_i+n_i}{m_i,n_i}_q=\prod\limits_{i=1}^{k}\frac{[m_i+n_i]_q!}{[m_i]_q![n_i]_q!}$.
\end{center}
If a $q$-number $[p]_q$ appears both in the numerator and denominator of $f(q)$, then we delete both of them. Finally we will obtain a fraction $\frac{[a_1]_q\cdots[a_s]_q}{[b_1]_q\cdots[b_t]_q}$ and $a_i\neq b_j$. We call this fraction the \emph{reduced form} of $f(q)$. It is not difficult to observe that the reduced form is unique.
\begin{theorem}[\cite{Che2017}]
Consider a product of $q$-binomial coefficients $f(q)=\prod\limits_{i=1}^{k}\binom{m_i+n_i}{m_i,n_i}_q$, then $f(q)$ can be realized as the plucking polynomial of some rooted trees if and only if the numerator of the reduced form of $f(q)$ does not repeat any $q$-number.
\end{theorem}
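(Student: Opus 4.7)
The plan is to handle the two directions of the biconditional separately.

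For necessity, assume $T$ is a rooted tree with $D(T)=\{a_1,\ldots,a_n\}$. Its root $r$ satisfies $d(r)=|V(T)|-1=n-1=a_n$, so Proposition~2.4 gives $Q(T)=[a_n]_q!/\prod_{i=1}^{n-1}[a_i+1]_q$. On the other hand, by the state product formula (Proposition~2.3), $Q(T)$ is a product of $q$-multinomial Boltzmann weights, and each $q$-multinomial is itself a product of $q$-binomials via the telescoping identity used immediately after Lemma~2.1. Thus $f(q):=[a_n]_q!/\prod_{i=1}^{n-1}[a_i+1]_q$ is a product of $q$-binomial coefficients.

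For sufficiency, assume $f(q)$ is a product of $q$-binomials. The numerator $[a_n]_q!=[1]_q[2]_q\cdots[a_n]_q$ lists each $q$-number at most once, so the reduced form of $f(q)$ also has no repeated $q$-number in its numerator. Theorem~5.3 then furnishes a rooted tree $T'$ with $Q(T')=f(q)$, which we may take to be reduced with root $r'$ after destabilizations. Equating Proposition~2.4 applied to $T'$ with our expression for $f$ and cross-multiplying produces a polynomial identity between products of $q$-numbers; I apply the unique-factorization fact that $\prod_j [m_j]_q$ determines the multiset $\{m_j\}$ of indices $\geq 2$ (proved by M\"obius inversion over the divisibility poset once one notes $\Phi_N(q)\mid [m]_q$ iff $N\mid m$). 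Combined with the reduced-tree bound $d(v)+1\leq d(r')-1$ for every non-root $v$, this forces $d(r')\leq a_n$ and yields the multiset identity
\[
\{a_i+1\}_{i=1}^{n-1}=\{d(r')+1,d(r')+2,\ldots,a_n\}\cup\{d(v)+1\}_{v\neq r'}.
\]
Performing $a_n-d(r')$ successive stabilizations on $T'$ preserves the plucking polynomial and adjoins precisely the integers $d(r')+1,d(r')+2,\ldots,a_n$ to the $D$-multiset; the resulting tree $T$ therefore has $D(T)=\{d(r')\}\cup\{d(v):v\neq r'\}\cup\{d(r')+1,\ldots,a_n\}$, which equals $\{a_1,\ldots,a_n\}$ by the displayed identity.

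The principal technical obstacle is the unique-factorization step for products of $q$-numbers, together with the careful handling of the trivial factor $[1]_q=1$: the cyclotomic content only constrains indices of size $\geq 2$, so matching the counts of zero entries on both sides of $D(T)=\{a_i\}$ is a supplementary accounting step that becomes automatic once one invokes the tuple's compatibility $n=a_n+1$. With this bookkeeping in place, Theorem~5.3 combined with the stabilization construction produces the required tree.
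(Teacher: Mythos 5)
The statement you were asked to prove is Theorem 5.3, the realization criterion for an arbitrary product of $q$-binomial coefficients $f(q)=\prod_{i=1}^{k}\binom{m_i+n_i}{m_i,n_i}_q$: such an $f(q)$ is a plucking polynomial if and only if the numerator of its reduced form repeats no $q$-number. Your proposal does not prove this; it \emph{invokes} it. The sentence ``Theorem~5.3 then furnishes a rooted tree $T'$ with $Q(T')=f(q)$'' is the entire content of the hard (``if'') direction of the statement under review, so as a proof of Theorem 5.3 the argument is circular. What you have actually written is a proof of Theorem 5.2 (the realizability criterion for a prescribed multiset $D(T)=\{a_1,\dots,a_n\}$), and indeed your necessity paragraph, your use of unique cyclotomic factorization of products of $q$-numbers, and your stabilization bookkeeping closely parallel the paper's own Section 5 argument for Theorem 5.2 --- but Theorem 5.2 is a different (and in this paper, later and dependent) result. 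Note also that the paper itself gives no proof of Theorem 5.3; it is imported from \cite{Che2017}, so the substance you would need to supply is precisely what is missing here.

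Concretely, two things are absent. First, the sufficiency direction of Theorem 5.3 requires an explicit construction: given any product of $q$-binomials whose reduced-form numerator has no repeated $q$-number, one must build a rooted tree (in \cite{Che2017} a binary tree assembled factor by factor, so that the given product matches the state product formula of Proposition 2.3 up to trivial weights) realizing it. Your proposal contains no such construction, and your setup silently specializes $f(q)$ to the shape $[a_n]_q!/\prod_{i=1}^{n-1}[a_i+1]_q$, which is not the hypothesis of Theorem 5.3. Second, the necessity direction for a general $f(q)$ needs the observation that $Q(T)=[d(r)]_q!/\prod_{v\neq r}[d(v)+1]_q$ has a numerator listing each $q$-number at most once, together with the uniqueness of the reduced form, to conclude that \emph{any} presentation of $Q(T)$ as a product of $q$-binomials has a repetition-free reduced numerator; your proposal gestures at the first half but never addresses reduced forms of general products. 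To repair the submission you would either need to import the construction from \cite{Che2017} explicitly, or redirect the proof at Theorem 5.2, which is what it currently establishes.
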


We would like to remark that we can always find a binary rooted tree $T$ to realize $f(q)$, and $\prod\limits_{i=1}^{k}\binom{m_i+n_i}{m_i,n_i}_q$ coincides with the state product formula (Proposition 2.3) of the plucking polynomial of $T$ if we ignore the contributions from those vertices that have only one child (note that these contributions are trivial). The readers are referred to \cite{Che2017} for more details.

Now we give the proof of Theorem 5.2.
\begin{proof}
The ``only if" part has been explained in Section 2, therefore it suffices to prove the``if" part.

First note that if $a_{n-1}=a_n$ then obviously $\frac{[a_n]_q!}{\prod\limits_{i=1}^{n-1}[a_i+1]_q}$ cannot be written as a product of some $q$-binomial coefficients. We claim that actually we can assume that $a_{n-1}\leq a_n-2$. This is because, if $a_{n-1}=a_n-1$ then we have
\begin{center}
$\frac{[a_n]_q!}{\prod\limits_{i=1}^{n-1}[a_i+1]_q}=\frac{[a_{n-1}]_q!}{\prod\limits_{i=1}^{n-2}[a_i+1]_q}$.
\end{center}
If $\{a_1, a_2, \cdots, a_{n-1}\}$ can be realized as the set $D(T)$ for some rooted tree $T$, then we can find a rooted tress $T'$ realizing $\{a_1, a_2, \cdots, a_n\}$ by applying a stabilization on $T$. Hence from now on let us assume that $a_{n-1}\leq a_n-2$.

Note that if $\frac{[a_n]_q!}{\prod\limits_{i=1}^{n-1}[a_i+1]_q}$ can be written as the product of some $q$-binomial coefficients $\prod\limits_{i=1}^{k}\binom{m_i+n_i}{m_i,n_i}_q$, then the numerator of the reduced form does not repeat any $q$-number, since the numerator equals $[a_n]_q!=[1]_q[2]_q\cdots[a_n]_q$. According to Theorem 5.3 we know that there exists a rooted tree $T$ such that $Q(T)=\prod\limits_{i=1}^{k}\binom{m_i+n_i}{m_i,n_i}_q=\frac{[a_n]_q!}{\prod\limits_{i=1}^{n-1}[a_i+1]_q}$. With some destabilizations we can assume that $T$ is reduced.

Let us assume that $D(T)=\{b_1, \cdots, b_m\}$ and $0\leq b_1\leq b_2\leq\cdots\leq b_m$. Then we have
\begin{center}
$\frac{[b_m]_q!}{\prod\limits_{i=1}^{m-1}[b_i+1]_q}=\frac{[a_n]_q!}{\prod\limits_{i=1}^{n-1}[a_i+1]_q}$.
\end{center}
Since $a_{n-1}\leq a_n-2$ and $b_{m-1}\leq b_m-2$, it follows that $b_m=a_n$. Now we have
\begin{center}
$\frac{1}{\prod\limits_{i=1}^{m-1}[b_i+1]_q}=\frac{1}{\prod\limits_{i=1}^{n-1}[a_i+1]_q}$,
\end{center}
which implies $\{b_1+1, \cdots, b_{m-1}+1\}=\{a_1+1, \cdots, a_{n-1}+1\}$ and $m=n$. Therefore $D(T)=\{b_1, \cdots, b_m\}=\{a_1, \cdots, a_n\}$, this completes the proof.
\end{proof}

\section*{Acknowledgements}
The first author would like to thank Hao Zheng for helpful discussion. The first author is supported by NSFC 11301028 and NSFC 11571038. The third author is supported by the Simons Foundation Collaboration Grant for Mathematicians--316446 and CCAS Dean's Research Chair award.

\bibliographystyle{amsplain}

\end{document}